\definecolor{lightgray}{rgb}{0.8, 0.8, 0.8}
\definecolor{darkgray}{rgb}{0.7, 0.7, 0.7}
\definecolor{darkblue}{rgb}{0, 0, .4}
\newtheorem{theorem}{Theorem}[section]
\newtheorem{proposition}[theorem]{Proposition}
\newtheorem{lemma}[theorem]{Lemma}
\newtheorem{corollary}[theorem]{Corollary}
\newtheorem{conjecture}[theorem]{Conjecture}
\newtheorem{question}[theorem]{Question}
\newtheorem{remark}[theorem]{Remark}
\newtheoremstyle{example}{\topsep}{\topsep}%
     {}
     {}
     {\bfseries}
     {.}
     {.5em}
     {\thmname{#1}\thmnumber{ #2}}
\theoremstyle{example}
\newtheorem{example}[theorem]{Example}
\newtheoremstyle{negexample}{\topsep}{\topsep}%
     {}
     {}
     {\bfseries}
     {.}
     {.5em}
     {\thmname{#1}\thmnumber{ #2}}
\theoremstyle{negexample}
\newcounter{todocounter}
\long\def\symbolfootnote[#1]#2{\begingroup%
\def\thefootnote{\fnsymbol{footnote}}\footnote[#1]{#2}\endgroup}
\newcommand{\Rm}[1]{\expandafter\@slowromancap\romannumeral #1@}
\newfont{\footsc}{cmcsc10 at 8truept}
\newfont{\footbf}{cmbx10 at 8truept}
\newfont{\footrm}{cmr10 at 10truept}
\renewenvironment{abstract}%
                {
                  \begin{list}{}%
                     {\setlength{\rightmargin}{1in}%
                      \setlength{\leftmargin}{1in}}%
                   \item[]\ignorespaces\begin{small}}%
                 {\end{small}\unskip\end{list}}
\keywords{cycles, permutation pattern, permutation squares}
\title{\sc{Pattern avoidance in permutations and their squares}}
\author{Mikl\'os B\'ona
\\[-0.25ex]
\small Department of Mathematics\\[-0.5ex]
\small University of Florida\\[-0.5ex]
\small Gainesville, Florida\\[15pt]
Rebecca Smith
\\[-0.25ex]
\small Department of Mathematics\\[-0.5ex]
\small SUNY Brockport\\[-0.5ex]
\small Brockport, New York\\[-1.5ex]
}
\date{}
\begin{document}
\maketitle

\newcommand{\s}{\mathbf{s}}
\newcommand{\m}{\mathbf{m}}
\renewcommand{\t}{\mathbf{t}}
\renewcommand{\b}{\mathbf{b}}
\newcommand{\f}{\mathbf{f}}
\newcommand{\rev}{\operatorname{rev}}
\newcommand{\dual}{\operatorname{dual}}
\newcommand{\D}{\mathcal{D}}
\newcommand{\Av}{\operatorname{Av}}

\newcommand{\inp}{\textsf{i}}
\newcommand{\tra}{\textsf{t}}
\newcommand{\out}{\textsf{o}}

\newcommand{\R}{\stackrel{R}{\sim}}
\renewcommand{\L}{\stackrel{L}{\sim}}
\newcommand{\notR}{\stackrel{R}{\not\sim}}
\newcommand{\notL}{\stackrel{L}{\not\sim}}

\newcommand{\OEISlink}[1]{\href{https://urldefense.proofpoint.com/v2/url?u=http-3A__oeis.org_-231-257D-257B-231&d=DwIGAg&c=sJ6xIWYx-zLMB3EPkvcnVg&r=0Htk6_We6KpopYAypczNQg&m=QsVkuMFt81EDlbnFHQ2lUp7u6ePHdD4JErAIQ1Aj-Tc&s=I1IDjLEK-ges6548LanY6fR37FvIzBfjQwcFXZye1yY&e= }}
\newcommand{\OEISref}{\href{https://urldefense.proofpoint.com/v2/url?u=http-3A__oeis.org_-257D-257BOEIS-257D-7E-255Ccite-257Bsloane-3Athe-2Don-2Dline-2Denc&d=DwIGAg&c=sJ6xIWYx-zLMB3EPkvcnVg&r=0Htk6_We6KpopYAypczNQg&m=QsVkuMFt81EDlbnFHQ2lUp7u6ePHdD4JErAIQ1Aj-Tc&s=TdYfuwERNqAex_sRNePm3XCj8Qsdv0d5jqfwODPRWGI&e= :}}
\newcommand{\OEIS}[1]{(Sequence \OEISlink{#1} in the \OEISref.)}

%
%
%
%
%
%
%
%

\def\sdwys #1{\xHyphenate#1$\wholeString}
\def\xHyphenate#1#2\wholeString {\if#1$%
\else\say{\ensuremath{#1}}\hspace{2pt}%
\takeTheRest#2\ofTheString
\fi}
\def\takeTheRest#1\ofTheString\fi
{\fi \xHyphenate#1\wholeString}
\def\say#1{\begin{turn}{-90}\ensuremath{#1}\end{turn}}

\begin{abstract} We study permutations $p$ such that both $p$ and $p^2$ avoid a given pattern $q$. 
We obtain a generating function for the case of $q=312$ (equivalently, $q=231$), we prove that if $q$ is monotone increasing, then above a certain length, there are no such permutations, and we prove an upper
bound for $q=321$. We also present some intriguing questions in the case of $q=132$.  
\end{abstract}

\section{Introduction}
The vast and prolific area of {\em permutation patterns} considers permutations as {\em linear orders}. In this concept,
a permutation $p$ is a linear array $p_1p_2\cdots p_n$ of the first $n$ positive integers in some order, so that each 
integer occurs exactly once. We say that a permutation $p$ {\em contains} the pattern $q=q_1q_2\cdots q_k$ 
if there is a $k$-element set of indices $i_1<i_2< \cdots <i_k$ so that $p_{i_r} < p_{i_s} $ if and only
if $q_r<q_s$.  If $p$ does not contain $q$, then we say that $p$ {\em avoids} $q$. A recent survey on permutation 
patterns can be found in \cite{vatter} and a book on the subject is \cite{combperm}. 

This definition does not consider the other perspective from which permutations can be studied, namely that of
the {\em symmetric group}, where the product of two  permutations is defined, and the notion of a permutation's inverse
is defined. Therefore, it is not surprising that pattern avoidance questions become much more difficult if the symmetric
group concept is present in them. (See \cite{archer}, \cite{cory} or \cite{huang} for a few results in this direction.)
 One exception to this is the straightforward observation \cite{awalk} that if $p$ avoids $q$, 
then its inverse permutation $p^{-1}$ avoids $q^{-1}$. 

Trying to extend that simple observation at least a little bit, in this paper we study the following family of questions. 
Let us call a permutation $p$ {\em strongly $q$-avoiding} if both $p$ and $p^2$  avoid $q$. Let $\textup{Sav}_n(q)$ denote the number of strongly $q$-avoiding permutations of length $n$. What can be said about the numbers 
$\textup{Sav}_n(q)$? As far as we know, this is the first time that questions of this type are considered, that is, 
when one attempts to enumerate permutations $p$ so that both $p$ and $p^2$ are to avoid the {\em same}
pattern $q$. 

In Section \ref{sec-monotone}, we will prove that if $q$ is a monotone increasing pattern of any length, then 
$\textup{Sav}_n(q)=0$ will hold for
$n$ sufficiently large. This is only true for monotone increasing patterns. Then in Section  \ref{312-george_1},
we will  provide an explicit formula for
the generating function for the sequence of numbers $\textup{Sav}_n(312)=\textup{Sav}_n(231)$, while in 
Section \ref{sec-321}, we will prove a lower
bound for the sequence $\textup{Sav}_n(321)$.  Finally, in Section \ref{sec132}, we end the paper with intriguing questions about the (identical) sequences
 $\textup{Sav}_n(132)=\textup{Sav}_n(213)$.

\section{The pattern $12\cdots k$} \label{sec-monotone}

In this section, we prove that for all $k$,  if $p$ is long enough, then either $p$ or $p^2$ must contain an increasing subsequence of length $k$.

\begin{theorem}~\label{monotone_bound} Let $k$ be a positive integer, and let $n\geq (k-1)^3+1$. Then $\textup{Sav}_n(12\cdots k)=0$.
\end{theorem}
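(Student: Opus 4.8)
The plan is to prove the contrapositive: if both $p$ and $p^2$ avoid $12\cdots k$, then $n \le (k-1)^3$. The engine is the classical staircase-height statistic. For any permutation $\sigma$ of length $n$, let $a_i(\sigma)$ denote the length of the longest increasing subsequence of $\sigma$ ending at position $i$. Since $\sigma$ avoids $12\cdots k$ exactly when its longest increasing subsequence has length at most $k-1$, each value $a_i(\sigma)$ lies in $\{1,2,\ldots,k-1\}$. The one fact I will lean on repeatedly is the elementary monotonicity: if $i<j$ and $\sigma(i)<\sigma(j)$, then $a_i(\sigma)<a_j(\sigma)$, since an optimal increasing chain ending at $i$ can be extended by appending position $j$. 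Read contrapositively, whenever two positions $i<j$ satisfy $a_i(\sigma)=a_j(\sigma)$ we must have $\sigma(i)>\sigma(j)$; that is, each level set of the height statistic is a decreasing subsequence of $\sigma$.

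Next I would attach to each position $i$ the triple
\[
\Phi(i)=\bigl(a_i(p),\; a_{p(i)}(p),\; a_i(p^2)\bigr),
\]
whose three coordinates all range over $\{1,\ldots,k-1\}$, so that $\Phi$ takes at most $(k-1)^3$ distinct values. If $n\ge (k-1)^3+1$, the pigeonhole principle forces two positions $i<j$ with $\Phi(i)=\Phi(j)$. I will show that such a collision is impossible, which yields $n\le (k-1)^3$ and hence the theorem.

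To derive the contradiction, I read off the three coordinate equalities in turn. From $a_i(p)=a_j(p)$ with $i<j$, monotonicity applied to $p$ gives $p(i)>p(j)$. Now I regard $p(j)<p(i)$ as a pair of positions and invoke the equality $a_{p(i)}(p)=a_{p(j)}(p)$: the same monotonicity fact, again for $p$, forces $p(p(j))>p(p(i))$, i.e.\ $p^2(i)<p^2(j)$. On the other hand, $a_i(p^2)=a_j(p^2)$ with $i<j$ and monotonicity applied to $p^2$ give $p^2(i)>p^2(j)$. These two conclusions contradict one another, so no collision can occur.

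The routine ingredients are the boundedness of the heights (immediate from Erd\H{o}s--Szekeres together with the avoidance hypothesis) and the pigeonhole count. The step that actually requires insight, and the one I expect to be the crux, is the choice of the middle coordinate $a_{p(i)}(p)$: it is precisely the device that converts information about how $p$ reorders the \emph{values} $p(i)$ into a statement about $p^2$, letting a single application of the monotonicity lemma bridge between $p$ and its square. Everything else is bookkeeping.
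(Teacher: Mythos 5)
Your argument is correct: the three coordinates of $\Phi$ are bounded by $k-1$ precisely because $p$ and $p^2$ avoid $12\cdots k$, and the three applications of the monotonicity fact --- to the pair $(i,j)$ in $p$, to the pair $(p(j),p(i))$ in $p$, and to the pair $(i,j)$ in $p^2$ --- yield the contradiction $p^2(i)<p^2(j)<p^2(i)$. However, your route differs from the paper's in roughly the way Seidenberg's proof of Erd\H{o}s--Szekeres differs from the Dilworth-decomposition proof. The paper colors the entries of $p$ by a decomposition into $k-1$ decreasing subsequences and applies pigeonhole twice: first to extract a color class of size at least $(k-1)^2+1$, then, among the entries of $p$ sitting in the positions $p(i)$ for $i$ in that class, to extract $k$ entries sharing a color; it then shows explicitly that these $k$ indices carry an increasing subsequence of length $k$ in $p^2$, contradicting avoidance. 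In particular, the paper attaches no statistic to $p^2$ at all --- avoidance by $p^2$ enters only as the target of the final contradiction --- whereas you make that avoidance quantitative via the third coordinate $a_i(p^2)\le k-1$, which is what lets a single pigeonhole on triples and a two-element collision do all the work, with no length-$k$ pattern ever constructed. The shared kernel is your middle coordinate: the observation that equal $p$-heights at positions $p(i)$ and $p(j)$ convert an inversion among the values of $p$ into an order statement about $p^2$ is exactly the paper's key step. What your packaging buys is brevity and a clean injectivity statement ($\Phi$ is injective on positions); what the paper's buys is an explicit witness, showing exactly where the $12\cdots k$ pattern sits inside $p^2$.
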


\begin{proof} Let $p$ be a permutation of length $n\geq  (k-1)^3+1$ that avoids the pattern $12\cdots k$.
Then $p$ is the union of $k-1$ decreasing subsequences. Color these subsequences with colors $1,2,\ldots ,k-1$ so that
a maximum-length subsequence gets the color 1. Then there are at least $(k-1)^2+1$ entries of color 1. Let us say that these entries are in positions $i_1<i_2<\cdots <i_m$, and they are $p(i_1)>p(i_2)>\cdots >p(i_m)$. In order to simplify 
notation, let $P_j=p(i_j)$.

 Now consider the $m\geq (k-1)^2+1$
positions $P_m<P_{m-1}<\cdots <P_1$ in $p$. By the Pigeon-hole principle, there will be a set $S$ of at least $k$ positions
among them so that entries
in these positions  are of the same color, that is, that form a decreasing subsequence $S$. Then we claim 
that those same
$k$ entries form an increasing sequence in $p^2$.  Indeed, let positions $P_a$ and $P_b$ contain two entries of $S$,
with $a<b$, so $P_a>P_b$.
That means that position $p(i_a)$ contains a smaller entry than position $p(i_b)$, that is, $p(p(i_a))<p(p(i_b))$. 
This argument can be repeated for every pair of entries in positions that belong to $S$, proving that in $p^2$, the 
$k$ entries that are in positions in $S$ form an increasing subsequence.
\end{proof}

The bound on the length of $n$ relative to $k$ for a strongly $12\cdots k$-avoiding permutation given above is tight for at least small values of $k$.  This is trivially true for $k=1,2$.  Moreover, we are going to present
a strongly  $12\cdots k$-avoiding permutation of length $(k-1)^3$ for each of $k=3, 4, 5, 6$. In these cases, the strategy is to create $p$ such that
$p^2$ will be a be a specific layered permutation, that is, it will consist of $k-1$ decreasing subsequences (the layers) of 
length $(k-1)^2$ so that the entries decrease within each layer, but increase from one layer to the next.  As layered permutations are involutions, this means each $p$ has order four.  Further since $p^3 = p^{-1}$ and the inverse of the identity permutation is itself, we also have $p^3$ avoiding $12\cdots k$ in these examples.  However, finding a good $p$ (one that avoids $12\cdots k$ while producing the desired $p^2$) makes the construction challenging.

Indeed, for $k=3$,
let  $p=75863142=(1746)(2538)$, then we have $p^2=43218765$.  A similar approach yields a maximum length strongly $12\cdots k$-avoiding permutation of length $(k-1)^3$ for $k=4$:
\begin{align*}
p &= 24 \; 21 \; 26 \; 19 \; 23 \; 27 \; 20 \; 25 \; 22 \; 15 \; 12 \; 17 \; 10 \; 14 \; 18 \; 11 \; 16 \;13 \; 6 \; 3 \; 8 \; 1 \; 5 \; 9 \; 2 \;7 \; 4 \\
p^2 &= 9 \; 8 \; 7\; 6 \; 5\; 4\; 3\; 2\; 1 \; 18 \;17\;16\;15\;14\;13\;12\;11\;10 \; 27 \; 26 \;25 \; 24 \; 23 \; 22 \; 21 \; 20 \;19
\end{align*}

Somewhat similarly constructed strongly $12\cdots k$-avoiding permutations of length $(k-1)^3$ also exist for $k=5,6$.  

For $k=5$, each of the four intervals of consecutive entries is of the form: 
\[
9 \; 5 \; 11 \; 7 \; 15 \; 3 \; 13 \; 1 \; 16 \; 4 \; 14 \; 2 \; 10 \; 6 \; 12\;  8 = (1\; 9\; 16\; 8)(2\; 5\; 15\; 12)(3\; 11\; 14\; 6)(4\; 7\; 13\; 10).
\]
From left to right, each interval is made up of the largest remaining $16$ entries.

For $k=6$, the five intervals are of the form:
\[
14\; 9\; 18\; 7\; 16\; 11\; 22\; 3\; 24\; 5\; 20\; 1\; 13\; 25\; 6\; 21\; 2\; 23\; 4\; 15\; 10\; 19\; 8\; 17\; 12.
\]
That is, $(1\; 14\; 25\; 12) (2\; 9\; 24\; 17) (3\; 18\; 23\; 8) (4\; 7\; 22\; 19) (5\; 16\; 21\; 10) (6\; 11\; 20\; 15)$.
From left to right, each interval is made up of the largest remaining $25$ entries.


There are also nice graphical symmetries within the intervals in these small examples.  As such, there is reason to believe it could be possible to generalize these constructions to show the bound given in Theorem~\ref{monotone_bound} is always the best.  

\begin{conjecture}  Let $k$ be a positive integer, and let $n = (k-1)^3$. Then $\textup{Sav}_n(12\cdots k) \neq 0$.
\end{conjecture}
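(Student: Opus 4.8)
The plan is to make precise the interval-and-square strategy already used for $k\le 6$ and to reduce the entire statement to a single claim about one permutation of length $(k-1)^2$. Write $d=k-1$ and $m=d^2$, and suppose we are handed a permutation $\sigma$ of $\{1,\dots,m\}$. Build $p$ of length $n=d\cdot m=(k-1)^3$ out of $d$ consecutive intervals of length $m$ by setting $p((a-1)m+r)=(d-a)m+\sigma(r)$ for $1\le a\le d$ and $1\le r\le m$; that is, the $a$-th position-interval carries the $a$-th largest block of $m$ consecutive values, arranged internally according to $\sigma$ (this matches every displayed example). Feeding this back into $p$ (the value $(d-a)m+\sigma(r)$ sits in position-interval $a'=d+1-a$ at internal position $\sigma(r)$) yields $p^2((a-1)m+r)=(a-1)m+\sigma^2(r)$. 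Thus $p^2$ is block-diagonal with its value-blocks now in \emph{increasing} order, and within each block it realizes the pattern $\sigma^2$. Choosing $\sigma$ so that $\sigma^2$ is the decreasing permutation $m\,(m-1)\cdots 1$ makes $p^2$ exactly the layered permutation with $d$ decreasing layers of length $m$, whose longest increasing subsequence is $d<k$, so $p^2$ avoids $12\cdots k$. Finally, since the values strictly decrease from one position-interval to the next, any increasing subsequence of $p$ must lie inside a single interval, so the longest increasing subsequence of $p$ equals that of $\sigma$.

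This reduces the conjecture to a self-contained statement: for every $d\ge 1$ there is a permutation $\sigma$ of $\{1,\dots,d^2\}$ with $\sigma^2$ equal to the decreasing permutation and with longest increasing subsequence at most $d$. The first requirement is easy. Writing $w$ for the decreasing permutation $i\mapsto m+1-i$, its nontrivial cycles are the transpositions $(i,\,m+1-i)$, plus one central fixed point when $m$ is odd. Any product of disjoint $4$-cycles of the shape $(i,\,j,\,m+1-i,\,m+1-j)$ squares to $(i,\,m+1-i)(j,\,m+1-j)$, so one gets a square root of $w$ by pairing up the transpositions of $w$ and fixing the center when $m$ is odd. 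The parity always cooperates: the number of transpositions is $m/2$ for $d$ even and $(m-1)/2$ for $d$ odd, and in both cases this count is even, so the pairing can always be carried out. Hence square roots of $w$ are abundant, and all the difficulty lies in the second requirement.

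The real content, exactly as the authors flag, is to choose the pairing and the orientation of the $4$-cycles so that $\sigma$ has no increasing subsequence of length $k$. I would prove this by exhibiting an explicit \emph{decreasing cover}: a partition of $\{1,\dots,d^2\}$ into $d$ classes of size $d$ on each of which $\sigma$ is decreasing. Such a cover forces the longest increasing subsequence down to $d$, since an increasing subsequence meets each class at most once. Patience sorting of the displayed $\sigma$ for $k=5$ already splits it into exactly $d=4$ decreasing piles of size $4$, so the target cover genuinely exists in the examples; the task is to produce it uniformly in $d$. A natural way to organize the construction is to coordinatize $\{1,\dots,d^2\}$ by a $d\times d$ grid so that $w$ becomes the $180^{\circ}$ rotation and $\sigma$ becomes a quarter-turn, whose square is then automatically $w$; one must choose the grid labeling of positions and of values so that quarter-turn images break into $d$ decreasing runs.

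The hard part will be this last step: finding one labeling that simultaneously realizes a quarter-turn square root and pins the longest increasing subsequence at $d$, and then proving the decreasing-cover property for all $d$ at once. An induction on $d$, or an RSK argument showing the insertion tableau of $\sigma$ is forced to have $\le d$ columns and hence (with $d^2$ cells) a square $d\times d$ shape, both look plausible as the final mechanism. The central fixed point in the odd case is the one local irregularity to watch, since it can lengthen an increasing subsequence; the $k=4$ and $k=6$ examples show it can be absorbed into the cover, but any general argument must account for it explicitly.
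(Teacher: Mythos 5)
This statement is a \emph{conjecture} in the paper: the authors only exhibit strongly $12\cdots k$-avoiding permutations of length $(k-1)^3$ for $k=3,4,5,6$, so there is no proof of theirs to compare against. Your reduction is correct and faithfully formalizes their examples: with $d=k-1$, $m=d^2$, and $p\bigl((a-1)m+r\bigr)=(d-a)m+\sigma(r)$, one indeed gets $p^2\bigl((a-1)m+r\bigr)=(a-1)m+\sigma^2(r)$; since the value blocks of $p$ decrease left to right, the longest increasing subsequence of $p$ equals that of $\sigma$, and if $\sigma^2$ is the decreasing permutation then $p^2$ is layered with longest increasing subsequence $d<k$. So the conjecture reduces, as you say, to producing a $\sigma$ on $\{1,\dots,d^2\}$ whose square is the reversal and whose longest increasing subsequence is at most $d$. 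Your parity argument that square roots of the reversal exist is also fine. The gap is that you never produce such a $\sigma$: everything after the reduction is a plan (``I would prove this by exhibiting an explicit decreasing cover,'' ``induction \dots or an RSK argument \dots both look plausible''), and the step you yourself identify as ``the real content'' is exactly the step left undone. As written, the proposal establishes nothing beyond the $k\le 6$ cases already in the paper.

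What makes this more regrettable is that the quarter-turn idea you gesture at in your last paragraph actually works with the most naive labeling, and closes the gap in a few lines. Identify both positions and values with the grid $[d]\times[d]$ via the row-major bijection $(x,y)\mapsto (x-1)d+y$, and let $\sigma$ be the quarter turn, i.e.\ $\sigma\bigl((x-1)d+y\bigr)=(y-1)d+(d+1-x)$. Then $\sigma^2$ sends $(x,y)$ to $(d+1-x,d+1-y)$, which linearizes to $v\mapsto d^2+1-v$, the decreasing permutation. For the increasing-subsequence bound, note that positions are ordered lexicographically by $(x,y)$ while values are ordered lexicographically by $(y,\,d+1-x)$; comparing the two orders, any pair that increases in both position and value must have strictly increasing $y$-coordinate, so no increasing subsequence of $\sigma$ has length exceeding $d$ (equivalently, the $d$ classes obtained by fixing $y$ are exactly the decreasing cover you wanted). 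For $d=4$ this gives $\sigma = 4\;8\;12\;16\;3\;7\;11\;15\;2\;6\;10\;14\;1\;5\;9\;13$, a different but equally valid choice from the paper's $k=5$ block. Plugging this $\sigma$ into your reduction yields a complete proof of the conjecture for every $k$; the skeleton of your argument is right, but the proposal stops short of a proof precisely at its critical step.
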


We have not formulated a conjecture as to how many such maximal length permutations strongly avoid $12\cdots k$.  Our example is not unique for $k=3$ and (for example $54387612$ also strongly avoids $123$), but we have not investigated this for larger values of $k$. Furthermore, if $p^2$ is layered, and $p$ is strongly 
$12\cdots k$-avoiding, then, as we pointed out, $p^3=p^{-1}$ is also $12\cdots k$-avoiding. Our constructions
for $3\leq k\leq 6$ are examples for this. 

Note that no other patterns $q$ have the property that $\textup{Sav}_n(q)=0$ if $q$ is large enough. Indeed, 
the identity permutation strongly avoids all patterns that are not monotone increasing.

\section{The pattern 312} \label{312-george_1}

In a 312-avoiding permutation, all entries on the left of the entry 1 must be smaller than all entries on the right of 1, or
a 312-pattern would be formed with the entry 1 in the middle. Therefore, if $p=p_1p_2\cdots p_n$ is a 312-avoiding 
permutation, and $p_i=1$, then $p$ maps the interval $[1,i]$ into itself, and the interval $[i+1,n]$ into itself. 
That means that $p$ will be strongly 312-avoiding if and only if its restrictions to those two intervals are strongly
312-avoiding. In other words, each non-empty strongly 312-avoiding permutation $p$  uniquely decomposes as $p=LR$, where
$L$ is a strongly 312-avoiding permutation ending in the entry 1, and $R$ is a (possibly empty) strongly 312-avoiding permutation (on the set $[i+1,n])$). Sometimes, this is described by writing that $p=L\oplus R$, and
saying that $p$ is the direct sum of $L$ and $R$. 

Therefore, if $\textup{Sav}_{312}(z)=\sum_{n\geq 0} \textup{Sav}_n(312)z^n$, and $B(z)$ is the ordinary generating function for
the number of strongly 312-avoiding permutations ending in 1, then the equality
\begin{equation} \label{funceq} \textup{Sav}_{312}(z) = 1+ B(z)\textup{Sav}_{312}(z),\end{equation}
holds.  This motivates our analysis of strongly 312-avoiding permutations that end in the entry 1.

\subsection{Permutations ending in 1}
Our goal in this section is to prove the following theorem that characterizes strongly 312-avoiding permutations that
end in 1.  

\begin{theorem} \label{george_ending_in_1}  For any permutation $p$ ending in $1$, the following two statements are equivalent. 
\begin{enumerate}
\item[(A)] The permutation $p$ is strongly 312-avoiding.
\item[(B)] The permutation $p$ has form $p = (k+1) (k+2) \cdots n \; k (k-1) (k-2) \cdots 1$ where $k \geq \frac{n}{2}$.  That is, $p$ is unimodal beginning with its $n-k \leq \frac{n}{2}$ largest entries in increasing order followed by the remaining $k$ smallest entries in decreasing order.
\end{enumerate}
\end{theorem}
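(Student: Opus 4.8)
The plan is to prove the two implications separately; the direction (B)$\Rightarrow$(A) is a direct computation, while (A)$\Rightarrow$(B) is where the hypothesis that $p^2$ avoids $312$ does the real work.

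For (B)$\Rightarrow$(A) I would first record the elementary observation that every unimodal permutation (increasing, then decreasing) avoids $312$: in any copy of $312$ the middle position carries the smallest value and lies strictly between two larger values, which is impossible once the permutation has a single peak. Since a permutation of the form in (B) is unimodal, it avoids $312$. For the square I would compute $p^2$ explicitly from $p_i=k+i$ for $i\le n-k$ and $p_i=n+1-i$ for $i>n-k$. Using $k\ge n/2$ (so that $k+i>n-k$ for every $i\le n-k$), one finds that $p^2$ fixes the middle positions $n-k+1,\dots,k$ and acts as a decreasing run on each of the two blocks $1,\dots,n-k$ and $k+1,\dots,n$; that is, $p^2$ is the direct sum of a decreasing permutation of length $n-k$, the identity of length $2k-n$, and a decreasing permutation of length $n-k$. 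As each summand avoids $312$ and the pattern $312$ is sum-indecomposable, every copy of $312$ in a direct sum would lie inside a single summand, so $p^2$ avoids $312$. This is the step where $k\ge n/2$ is essential: without it the increasing prefix would be mapped partly into itself and the clean block structure of $p^2$ would break.

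For (A)$\Rightarrow$(B), set $c=p_1$. Because $p$ avoids $312$, the values $1,2,\dots,c-1$ must occur, after position $1$, in decreasing order: an ascending pair among them together with the larger leading entry $c$ would be a $312$. Writing $q_1<q_2<\cdots<q_{c-1}=n$ for their positions, we have $p(q_j)=c-j$ and hence $p^2(q_j)=p(c-j)=p_{c-j}$; thus, read along $q_1<\cdots<q_{c-1}$, the permutation $p^2$ displays $p_{c-1},p_{c-2},\dots,p_1$, the reversal of the initial segment $p_1p_2\cdots p_{c-1}$. Since $p^2$ avoids $312$, this reversed segment avoids $312$, so $p_1\cdots p_{c-1}$ avoids $213$; being also a factor of the $312$-avoiding $p$, it avoids both $213$ and $312$ and is therefore unimodal. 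This ``reversal lemma'' is the engine of the argument.

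It remains to upgrade this local information to the global shape in (B): that the $c-1$ small values occupy exactly the last $c-1$ positions as the decreasing run $c-1,c-2,\dots,1$, that the values $c,c+1,\dots,n$ fill the first $n-c+1$ positions in increasing order, and that $c-1\ge n/2$. The main obstacle is precisely this shape-pinning, and I would carry it out by contradiction, showing that every deviation manufactures an explicit copy of $312$ in $p^2$. If some value $\ge c$ sits to the right of a small value, or if the block of large values is not increasing, one locates a triple with a high, then low, then middle value in $p^2$, exactly as in the failing small cases $231\mapsto 312$, $3241\mapsto 4213$, and $2431\mapsto 4132$; and if $c-1<n/2$ then the increasing prefix is long enough that $p$ maps an initial stretch of it back into itself, producing an ascending pair of large values in $p^2$ that, followed by the terminal small values, forms a $312$ — this is the phenomenon behind $2341\mapsto 3412$. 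Assembling these forbidden configurations so as to cover all deviations, with careful bookkeeping of where each block is sent by $p$ and then by $p^2$, is the delicate and lengthy part of the proof; the reversal lemma above is what keeps that bookkeeping under control.
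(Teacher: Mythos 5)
Your (B)$\Rightarrow$(A) direction is complete and correct, and it is essentially the paper's own sufficiency argument: the explicit computation (valid because $k\geq n/2$ forces $k+i>n-k$) shows $p^2$ is the direct sum of a decreasing permutation of length $n-k$, an identity of length $2k-n$, and a decreasing permutation of length $n-k$, and since $312$ is sum-indecomposable this avoids $312$. Your ``reversal lemma'' for the other direction is also correct, and it is a genuinely different observation from anything in the paper: with $c=p_1$, avoidance of $312$ in $p$ forces the values $1,\ldots,c-1$ to appear in decreasing order, whence $p^2$ read along their positions is the reversal of the prefix $p_1\cdots p_{c-1}$, so that prefix must avoid $213$ as well as $312$ and is therefore unimodal.

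The genuine gap is that the rest of (A)$\Rightarrow$(B) --- which is the heart of the theorem --- is never proved. Your lemma constrains only the relative order inside the prefix $p_1\cdots p_{c-1}$; it does not show (i) that the small values $1,\ldots,c-1$ occupy exactly the \emph{last} $c-1$ positions consecutively, (ii) that the large values $c,\ldots,n$ occupy the first positions in \emph{increasing} order, or (iii) that $c-1\geq n/2$. For all three you offer only the assertion that ``every deviation manufactures an explicit copy of $312$ in $p^2$,'' illustrated by length-4 examples, and you yourself defer the general construction as ``delicate and lengthy.'' That deferred part is precisely where the paper works: it uses a pigeonhole count (if the entry $2$, or more generally the entry $m+1$ breaking the decreasing tail, sits at position $i$, then the $n-i-1$ entries larger than $p_1$ cannot all fit into the $n-i-2$ positions of $p^2$ lying between its entry $1$ and its final entry $p_1$, so one of them lands before the $1$ and creates a $312$ in $p^2$) to force the smallest $\lceil n/2\rceil$ values into the final positions in decreasing order, and then a separate argument that the large entries of $p$ may not form $213$ or $132$ patterns --- because composing with the decreasing tail turns such patterns into $312$'s in $p^2$ --- which, via a minimal-descent analysis, pins the large entries to be increasing. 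No substitute for these arguments appears in your proposal, so as written you have proved one implication in full and only a fragment of the other.
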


We will prove this theorem through a sequence of lemmas.  In this section, we will assume $p$ is a permutation of length $n$ ending in its entry 1.  
First, we will show that strongly 312-avoiding permutations must end in a long decreasing subsequence.

\begin{lemma} \label{21} Let $p=p_1p_2\cdots p_{n-1}1$ be a strongly 312-avoiding permutation.  If $n \geq 2$, then $p_{n-1}=2$.
\end{lemma}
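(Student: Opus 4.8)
The plan is to argue by contradiction. Since $p_n = 1$, the entry $p_{n-1}$ is at least $2$, so I would assume $p_{n-1} \geq 3$ and exhibit a $312$-pattern in $p^2$, contradicting the hypothesis that $p$ is strongly $312$-avoiding. Let $j$ denote the position of the entry $2$, i.e.\ $p_j = 2$. Because $p_{n-1} \neq 2$ and $p_n = 1 \neq 2$, we have $j \leq n-2$.

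First I would extract structural information from the fact that $p$ itself avoids $312$. The prefix $p_1 \cdots p_{n-1}$ is $312$-avoiding and uses the values $\{2,\ldots,n\}$, with minimum $2$ sitting in position $j$. Applying the decomposition at the minimum (exactly as in the discussion preceding the theorem, now applied to this prefix) shows that every entry left of position $j$ is smaller than every entry right of position $j$. Consequently the first $j$ positions carry precisely the $j$ smallest available values, namely $\{2,3,\ldots,j+1\}$. Two consequences are crucial: $p_1 \leq j+1$, and the largest value $n$, which exceeds $j+1$ since $j \leq n-2$, must occupy some position $t \geq j+1$. In particular $t \geq j+1 \geq p_1$, while also $t \geq 2$ and $t \leq n-1 < n$ (the latter because $p_n = 1 \neq n$).

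Next I would examine $p^2$ at three positions. Since $p(n) = 1$ we get $p^2(n) = p(1) = p_1$, and since $p(t) = n$ together with $p(n) = 1$ we get $p^2(t) = 1$. Thus in $p^2$ the global minimum $1$ sits in position $t$, the value $p_1 \geq 2$ sits in position $n$, and $t < n$. It then suffices to find a position $a < t$ with $p^2(a) > p_1$: for then $a < t < n$ carry the values $p^2(a) > p_1 = p^2(n) > 1 = p^2(t)$, which is precisely a $312$-pattern. To produce such an $a$, I would count. In $p^2$ the value $1$ occurs only at position $t$ and the value $p_1$ only at position $n$, so if every one of the $t-1$ positions before $t$ had $p^2$-value $\leq p_1$, those would be $t-1$ distinct values drawn from $\{2,\ldots,p_1-1\}$, forcing $t-1 \leq p_1 - 2$. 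This contradicts $t \geq p_1$, so some $a < t$ must satisfy $p^2(a) > p_1$, completing the pattern.

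I expect the main obstacle to be the structural step that localizes the small values to the front of $p$ and thereby yields the inequality $t \geq p_1$; this is exactly what makes the final pigeonhole count bite. It is instructive that when $t < p_1$ the count fails, and indeed no such $a$ need exist---this is consistent with the excluded case of the fully decreasing permutation, where $p_{n-1}=2$. The remaining points to check carefully are the identities $p^2(n) = p_1$ and $p^2(t) = 1$, the strict inequality $p_1 \geq 2$ (forced since $1$ occupies position $n \neq 1$), and that the three chosen positions $a < t < n$ are genuinely distinct and increasing.
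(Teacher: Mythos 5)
Your proof is correct and follows essentially the same route as the paper's: a contradiction argument using the decomposition of the $312$-avoiding prefix at the entry $2$, the observations $p^2(t)=1$ (where $p_t=n$) and $p^2(n)=p_1$, and a pigeonhole count producing an entry larger than $p_1$ before the $1$ in $p^2$, which yields the forbidden $312$. The only cosmetic difference is that you count the positions before the $1$ against the available small values (via $t \geq p_1$), whereas the paper counts the values $p_{i+1},\ldots,p_{n-1}$, all larger than $p_1$, against the positions between the $1$ and $p_1$ --- complementary versions of the same count.
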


\begin{proof}  Suppose by way of contradiction that $p_i =2$ where $i \neq n-1$.  Then, because $p$ must avoid 312, we know that $p_a<p_k$ if $a\leq i <k<n$.  Specifically, $p_1<p_k$ if $i<k<n$. Hence $p_{i +j} = n$ for some positive integer $j$.

Now consider $p^2$.  Note first $p^2(i+j) =1$ and $p^2(n) = p_1$.  There are at least \[(n-1) -(i+1)+1=n-i-1\] entries (namely $p_{i+1}, p_{i+2}, \ldots, p_{n-1}$) all of which are larger than $p_1$.  However, there are at most $n-1- (i+2) +1=n-i-2$ positions between $1$ and $p_1$ in $p^2$.  Hence at least one of these large entries appears before the $1$ which creates the forbidden 312 pattern in $p^2$.
\end{proof}

\begin{remark} \label{n_remark}  When $n$ appears in a strongly 312-avoiding permutation $p$, the entries that follow $n$ must be in decreasing order to avoid a 312 pattern in $p$.
\end{remark}

We now extend Lemma~\ref{21} to show a strongly 312-avoiding permutation $p$ must end in a consecutive decreasing sequence of length at least $\frac{n}{2}$.

\begin{lemma} \label{decrease} Let $p=p_1p_2\cdots p_{n-1}1$ be a strongly 312-avoiding permutation ending in 1.  The smallest $\lceil \frac{n}{2} \rceil$ entries of $p$ appear in the last $\lceil \frac{n}{2} \rceil$ positions in decreasing order. \end{lemma}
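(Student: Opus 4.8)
The plan is to prove a slightly sharper statement by induction on $\ell$: for every $\ell$ with $1\le \ell\le\lceil n/2\rceil$, the entries $1,2,\ldots,\ell$ occupy the last $\ell$ positions of $p$ in decreasing order, i.e. $p_{n-j+1}=j$ for $1\le j\le \ell$. The base case $\ell=1$ is the standing hypothesis that $p$ ends in $1$ (and $\ell=2$ is exactly Lemma~\ref{21}). For the inductive step I assume $p$ ends in the run $\ell\,(\ell-1)\cdots 1$ with $\ell<\lceil n/2\rceil$, so that $n\ge 2\ell+1$ and there is room to spare; writing $i$ for the position of the entry $\ell+1$, the last $\ell$ positions are already used, so $i\le n-\ell$, and the goal is to rule out $i<n-\ell$ and conclude $i=n-\ell$.

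First I would record the local structure forced by $312$-avoidance of $p$. The prefix $p_1\cdots p_{n-\ell}$ is a $312$-avoiding permutation of $\{\ell+1,\ldots,n\}$ whose minimum $\ell+1$ sits at position $i$, so all entries left of position $i$ are smaller than all entries strictly between position $i$ and the run; in particular the largest entry $n$ lies to the right of position $i$, say $p_k=n$ with $i<k\le n-\ell$. Passing to $p^2$, the only facts I need are $p^2(k)=p(p_k)=p(n)=1$ and, from the run, $p^2(n-j+1)=p(p(n-j+1))=p(j)=p_j$ for $1\le j\le \ell$. Thus the value $1$ of $p^2$ sits at position $k$, strictly before the last $\ell$ positions (since $k\le n-\ell<n-\ell+1$), while those last $\ell$ positions carry the values $p_\ell,\ldots,p_1$.

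The argument then splits on the size of $i$. If $i\le \ell$, I would exhibit an explicit $312$ in $p^2$: here $p^2(i)=p(p_i)=p_{\ell+1}$, which exceeds $\ell+1$ because position $\ell+1$ is a prefix position holding a value $\ge\ell+1$ that cannot equal $\ell+1$ (that value already sits at position $i\ne \ell+1$); and since $i\le \ell$, the value $\ell+1=p_i$ reappears in $p^2$ at position $n-i+1$, which lies after $k$ because $k+i\le (n-\ell)+\ell=n$. The three positions $i<k<n-i+1$ then carry values $p_{\ell+1}>\ell+1>1$, a $312$-pattern in $p^2$, contradicting that $p$ is strongly $312$-avoiding. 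If instead $i>\ell$, then positions $1,\ldots,\ell$ all precede position $i$, so $p_1,\ldots,p_\ell$ lie among the entries left of $\ell+1$, namely in the set $\{\ell+2,\ldots,\ell+i\}$. Here I would argue globally: since $1$ is the unique smallest value of $p^2$ and sits at position $k$, avoidance of $312$ forces every entry to the left of position $k$ in $p^2$ to be smaller than every entry to its right (otherwise the offending pair together with the $1$ is a $312$), so positions $1,\ldots,k-1$ of $p^2$ must be exactly $\{2,\ldots,k\}$. In particular $p_1,\ldots,p_\ell$ would all have to exceed $k$; but only $\ell+i-k<\ell$ of the values in $\{\ell+2,\ldots,\ell+i\}$ exceed $k$ (using $k>i$), so this is impossible and a $312$ must occur in $p^2$.

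I expect the case $i>\ell$ to be the main obstacle: unlike the case $i\le\ell$, one cannot cheaply name three explicit positions witnessing the pattern, so the argument must exploit the rigidity of $312$-avoidance around the unique value $1$ of $p^2$ — namely that it pins down precisely which values lie to its left — and then reach a contradiction by counting how many sufficiently large entries are actually available among $p_1,\ldots,p_\ell$. Once $i=n-\ell$ is established, the entry $\ell+1$ sits immediately before the run, extending it to $(\ell+1)\,\ell\cdots 1$ and completing the induction up to length $\lceil n/2\rceil$.
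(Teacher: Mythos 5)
Your proof is correct, and its skeleton matches the paper's: locate the entry $\ell+1$ at position $i$, use 312-avoidance of $p$ to force the entry $n$ into a position $k$ with $i<k\le n-\ell$, observe that $p^2(k)=1$ while the last $\ell$ positions of $p^2$ carry $p_\ell,\ldots,p_1$, and split into the cases $i\le\ell$ and $i>\ell$. (The paper phrases this as a contradiction against a maximal terminal run of length $m<n/2$ rather than as an induction extending the run, but that is cosmetic.) The one genuine divergence is your treatment of the case $i>\ell$. There the paper exhibits an explicit 312 pattern: since $i\ge\ell+1$, the value $i+1$ lies among the entries left of position $i$, say $p_r=i+1$ with $r<i$, and then $p^2(r)=p_{i+1}>p_1=p^2(n)>1=p^2(k)$ gives the pattern at positions $r<k<n$. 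You instead argue structurally that 312-avoidance of $p^2$ pins the entries left of its 1 down to exactly $\{2,\ldots,k\}$, and then reach a contradiction by counting: all of $p_1,\ldots,p_\ell$ land after position $k$ in $p^2$, so all would have to exceed $k$, yet they lie in $\{\ell+2,\ldots,\ell+i\}$, which contains at most $\ell+i-k<\ell$ values above $k$. Both arguments are valid; the paper's witness is shorter and fully explicit, while your counting argument makes explicit the rigidity around the 1 of $p^2$ that the paper exploits only locally, and it is arguably more robust since it does not require hunting for the particular preimage $p_r=i+1$. Your case $i\le\ell$ also uses a slightly different third witness than the paper ($\ell+1$ reappearing at position $n-i+1$ of $p^2$, versus the paper's $p_1=p^2(n)$), but the mechanism there is identical.
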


\begin{proof}  By way of contradiction, let us assume that $p$ is a strongly 312-avoiding permutation, and
let us also assume that the longest decreasing subsequence at the end of $p$ that consists of consecutive integers
in consecutive positions starts with the entry $m < \frac{n}{2}$.   So $m+1$ is the smallest entry where $p$ deviates from the described form.  

Then if $p_i=m+1$, we have $1 \leq i \leq n-m-1$.   As with Lemma~\ref{21}, the entries $p_1, p_2, \ldots, p_{i}$ must all be smaller than the entries $p_{i+1}, p_{i+2}, \ldots, p_{n-m}$, and in particular $p_{i+j}=n$ for some positive integer $j$.  
(Otherwise, $p$ would contain a 312-pattern with $p_i=m+1$ in the middle.)

We now consider $p^2$.  Note first $p^2(i+j) = 1$.  

Notice $p^2(i) = p_{m+1}$.  If $i < m+1$, then $p_{m+1} > p_1$ since $p_{m+1}$ appears to the right of $m+1$ and $m+1 < n-m+1$ so $p_{m+1}$ is not part of our descending sequence of small entries.  This means $p^2(i) p^2(i+j) p^2(n)$ form a 312-pattern in $p^2$.

Otherwise, $i \geq m+1$.  Then $p(r)=i+1$ for some $r <i$.  That is, one of the entries to the left of $m+1$ in $p$ maps to the position of a larger element to the right of $m+1$ in $p$.  Hence, $p^2(r) p^2(i+j) p^2(n)$ form a 312 pattern in $p^2$.

Thus every strongly 312-avoiding permutation $p$ must end with a consecutive decreasing sequence of at least half of its smallest entries.
\end{proof}

Next, we show that except for the monotone decreasing permutation, all strongly 312-avoiding permutations $p$ begin with an ascent.

\begin{lemma}~\label{monotone_dec}  If $p$ is a strongly 312-avoiding permutation ending in 1,  and $p$ begins with a descent, then $p$
is the decreasing permutation $n \; (n - 1) \cdots 3 \; 2 \; 1$.
\end{lemma}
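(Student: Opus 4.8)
The plan is to pass to the square and read off its final two entries, exploiting the constraints we already have on the tail of $p$. Suppose $p=p_1p_2\cdots p_{n-1}1$ is strongly $312$-avoiding and begins with a descent, so that $p_1>p_2$. I would argue by contradiction that $p_1$ must in fact be the largest entry $n$, and then invoke Remark~\ref{n_remark} to finish.

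The key step is to compute the behaviour of $p^2$ at its last two positions. I would use that $p(n)=1$, which gives $p^2(n)=p(1)=p_1$, together with Lemma~\ref{21}, which gives $p(n-1)=2$ and hence $p^2(n-1)=p(2)=p_2$. Thus positions $n-1$ and $n$ of $p^2$ carry the values $p_2$ and $p_1$, and since $p_1>p_2$ this is an ascent sitting at the very end of $p^2$. Now suppose, toward a contradiction, that $p_1\neq n$. Then $p_1<n$, so also $p_2<p_1<n$, and neither of the two final positions of $p^2$ holds the value $n$; therefore $n$ occurs in $p^2$ at some position $a\le n-2$. The triple $(a,n-1,n)$ then realizes the values $n>p_1>p_2$ in the order large, small, medium, i.e.\ a $312$-pattern in $p^2$, contradicting strong $312$-avoidance. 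Hence $p_1=n$, and Remark~\ref{n_remark} forces every entry following $n$ to decrease; as those entries are exactly $n-1,n-2,\dots,1$, I conclude $p=n\,(n-1)\cdots 2\,1$.

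The only genuine idea here is to examine $p^2$ rather than $p$, and to notice that Lemma~\ref{21} pins down precisely the two entries $p(n-1)=2$ and $p(n)=1$ that are needed to evaluate $p^2$ at its last two positions. Once that terminal ascent of $p^2$ is exposed, the largest entry of $p^2$ has nowhere to hide and the $312$-pattern is forced. I therefore expect no serious technical obstacle: the argument is a single short contradiction, with no case analysis and no pigeonhole, provided one correctly tracks which positions of $p^2$ are determined by the known tail entries of $p$.
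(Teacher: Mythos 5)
Your proof is correct and follows essentially the same route as the paper's: assume $p_1\neq n$, locate $n$ in the first $n-2$ positions of $p^2$, and use the last two entries $p_2,p_1$ of $p^2$ to exhibit a $312$-pattern, then finish with Remark~\ref{n_remark}. The only difference is that you spell out explicitly (via Lemma~\ref{21} and $p(n)=1$) why the final two entries of $p^2$ are $p_2$ and $p_1$, a computation the paper leaves implicit.
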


\begin{proof}  Suppose that $p$ does begin with a descent, that is, suppose $p_1 > p_2$.  Then, first notice if $p_1 \neq n$, then in $p^2$, we have $n$ appearing somewhere in the first $n-2$ positions.  However, then $n$ with the last two entries $p_2, p_1$ form a 312-pattern in $p^2$.  Hence if $p$ begins with a descent, then $p_1 = n$.  Now, as mentioned in Remark~\ref{n_remark}, all subsequent entries must appear in descending order to avoid a 312-pattern in $p$.
\end{proof}

By Lemma~\ref{decrease}, we know that a strongly 312-avoiding permutation $p$ that ends in 1 ends with at least a half of its smallest entries forming a decreasing subsequence of consecutive entries.  We now extend this to complete our proof that $p$ must have the unimodal structure described in Theorem~\ref{george_ending_in_1} be a strongly 312-avoiding permutation.

\begin{lemma}  A strongly 312-avoiding permutation $p$ ending in its entry 1
  must be of the form $p = (k+1) (k+2) \cdots n \; k (k-1) (k-2) \cdots 1$.
\end{lemma}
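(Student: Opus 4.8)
The plan is to pin down the exact shape of $p^2$ and read off what $312$-avoidance of $p^2$ forces on the ``head'' of $p$. Let $k$ be the length of the maximal run of consecutive integers $k, k-1, \ldots, 1$ occupying the final positions of $p$; by Lemma~\ref{decrease} we have $k \ge \lceil n/2\rceil \ge n/2$, so that $n-k \le k$, and the head $H = p_1 p_2 \cdots p_{n-k}$ is a permutation of the values $\{k+1, k+2, \ldots, n\}$. The goal is to show $H$ is increasing, i.e. $p_i = k+i$ for $1 \le i \le n-k$. Because the tail records $p(n+1-v) = v$ for every $v \le k$, and because $k \ge n/2$ forces each head value $p_i \ge k+1$ to be one of the tail \emph{positions} $n-k+1, \ldots, n$, a direct computation gives $p^2(i) = n+1-p_i$ for each head position $i$.

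I would then compute $p^2$ on the remaining positions in the same way, writing each tail position as $n+1-w$ with $w \le k$ so that $p^2(n+1-w) = p(w)$, and splitting on whether $w$ is a head position ($w \le n-k$) or a tail position ($n-k < w \le k$). This yields the block decomposition
\begin{equation*}
p^2 = \big(n+1-p_1, \ldots, n+1-p_{n-k}\big)\,\big(n-k+1, \ldots, k\big)\,\big(p_{n-k}, \ldots, p_1\big),
\end{equation*}
in which the first block (the complement of $H$) takes the values $\{1, \ldots, n-k\}$, the middle block is the increasing identity on $\{n-k+1, \ldots, k\}$, and the third block (the reverse of $H$) takes the values $\{k+1, \ldots, n\}$. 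The key structural point is that the three blocks increase \emph{simultaneously} in position and in value: for any three positions $a<b<c$ their blocks satisfy $\beta_a \le \beta_b \le \beta_c$, so $p^2(a)$ can be the largest of the three entries only if all three share a block. Hence every occurrence of $312$ in $p^2$ is confined to a single block.

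Since the middle block is increasing, $p^2$ avoids $312$ if and only if the first and third blocks do. Taking a complement turns a $312$ into a $132$ and taking a reversal turns it into a $213$, so these two conditions say exactly that $H$ avoids $132$ and $H$ avoids $213$; and $H$ avoids $312$ because it is a factor of the $312$-avoiding permutation $p$. It remains to show that a permutation avoiding all of $132, 213, 312$, together with the maximality of the tail, must be increasing. Locating the largest entry $\ell$ of $H$ and invoking the three avoidances in turn shows that everything before $\ell$ is increasing (from $213$), everything after $\ell$ is decreasing (from $312$), and every entry before $\ell$ exceeds every entry after it (from $132$); thus $H$ is an increasing run of its largest values followed by a decreasing run of its smallest values. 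Were the decreasing run nonempty, its final entry would be the least value $k+1$ of $H$, giving $p_{n-k} = k+1$ and extending the decreasing tail to length $k+1$, contradicting maximality. Hence the decreasing run is empty and $H = (k+1)(k+2)\cdots n$, the desired form.

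I expect the main obstacle to be the index bookkeeping that establishes the three-block form of $p^2$ and the clean ``single-block'' confinement of any $312$ occurrence; once those are in place, the passage to the three pattern-avoidances and the maximality argument are routine. One should also record the degenerate case $k=n$ (the decreasing permutation, with empty head), where the statement holds vacuously.
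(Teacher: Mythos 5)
Your proof is correct, and its engine is the same as the paper's: since the tail of $p$ is the decreasing run $k, k-1, \ldots, 1$, composing $p$ with itself pushes the head $H$ through that run, so the first $n-k$ entries of $p^2$ form the complement of $H$ and the last $n-k$ entries form the reverse of $H$; hence strong avoidance forces $H$ to avoid $132$ and $213$ in addition to the inherited $312$. The difference is in packaging and in the endgame. The paper extracts those two avoidance facts by pointwise pattern-chasing (a $213$ of large entries yields a $312$ in $p^2$, and likewise for $132$) and then finishes by choosing a descent $p_i p_{i+1}$ among the large entries with $p_{i+1}$ minimal, using maximality of $k$ to argue $p_{i+2}$ is again a large entry, and producing a forbidden $213$ or $312$. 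You instead write out all of $p^2$ as three blocks that increase simultaneously in position and value, note that any $312$ occurrence is trapped inside one block, and then classify $\{132,213,312\}$-avoiding permutations (an increasing run of top values followed by a decreasing run of bottom values), with maximality of $k$ killing the decreasing run. The two endgames are of comparable difficulty, but your block decomposition buys something the paper's argument does not: it is an if-and-only-if description of $p^2$, so it simultaneously yields the sufficiency direction (the paper's Proposition~\ref{george_sufficient}), which the paper proves separately by essentially the same computation. The one thing to tighten in a final write-up is the index bookkeeping you flagged (verifying $p^2(i)=n+1-p_i$ on head positions and the two cases on tail positions), all of which checks out.
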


\begin{proof}
Let $k$ be the length of the longest consecutive decreasing sequence at the end of $p$.  Now consider the large entries $k+1, k+2, \ldots, n$.  If $p$ does not satisfy the structure, then at least two of these large entries must form a descent.  Of all these descents, choose the descent $p_ip_{i+1}$ such that $p_{i+1}$ is the smallest.  

We first note that not only must these large entries of $p$ avoid 312, they must also avoid forming a $213$ pattern in $p$.  This is since the large entries appear in the first $\lfloor \frac{n}{2} \rfloor$ positions in $p$ and the entries corresponding to those positions are in decreasing order in $p$ by Lemma~\ref{decrease}.  Hence a $213$ of large entries in $p$ would form a 312 pattern in $p^2$.

%
%

Similarly, the large entries must avoid forming a $132$ in $p$.  If there was such a pattern, say $p_a p_b p_c$, the entries in these position in $p^2$, namely $p^2_a p^2_b p^2_c $ form a $312$ pattern. 

Thus all of $p_{1}, p_{2}, \ldots, p_{i-1}$ must be larger than  $p_{i+1}$ as a smaller such entry along with entries $p_{i}, p_{i+1}$ would form $132$ pattern in $p$.  

Consider the entry $p_{i+2}$.  First note $p_{i+2}$ must be one of the large entries since $p_{i+1} < p_{j}$ for all $j \leq i$, and so $p_{i+2}$ being part of the long decreasing sequence at the end of $p$ would mean $p_{i+1}$ is part of it as well.  Thus $p_{i+2} > p_{i+1}$ by the minimality constraint on the chosen descent.
However, this forces $p_{i} p_{i+1} p_{i+2}$ to form a $213$ or a $312$ among the large entries of $p$, both of which are not allowed.

Hence if $p$ is a strongly 312-avoiding permutation, it must be of the form \[p = (k+1) (k+2) \cdots n \; k (k-1) (k-2) \cdots 1.\]
\end{proof}

To complete the proof of Theorem~\ref{george_ending_in_1}, we show these conditions are not only necessary, but also sufficient.

\begin{proposition}~\label{george_sufficient}  Let  $k \geq \frac{n}{2}$. Then any permutation $p = (k+1) (k+2) \cdots n \; k (k-1) (k-2) \cdots 1$  is a strongly 312-avoiding permutation.
\end{proposition}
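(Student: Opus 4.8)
The plan is to prove the two halves of ``strongly $312$-avoiding'' separately: that $p$ avoids $312$, and that $p^2$ avoids $312$. The first half is immediate from the unimodal shape of $p$. Writing $p$ in one line, its first $n-k$ entries $k+1,k+2,\dots,n$ are increasing and its last $k$ entries $k,k-1,\dots,1$ are decreasing. For indices $a<b<c$ I would run through the (at most four) ways the three positions can be distributed between the ascending prefix and the descending suffix; in each case the relative order of $p_a,p_b,p_c$ is incompatible with the pattern $312$. For instance, if $a$ lies in the prefix then $p_a<p_b$ whenever $b$ also lies in the prefix, contradicting that $p_a$ be the largest of the three, while $p_b>p_c$ whenever $b,c$ both lie in the suffix, contradicting that $p_b$ be the smallest and $p_c$ the middle value. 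Hence $p$ avoids $312$.

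The substance of the proposition is the claim that $p^2$ avoids $312$, and for this I would first compute $p^2$ explicitly. Regarding $p$ as a function, $p(i)=k+i$ for $1\le i\le n-k$ and $p(i)=n-i+1$ for $n-k+1\le i\le n$. The hypothesis $k\ge n/2$ is exactly what is needed to control the second application of $p$: when $1\le i\le n-k$ we have $p(i)=k+i\ge k+1> n-k$, so $p(i)$ already lies in the descending part of the domain, whence $p^2(i)=p(k+i)=n-(k+i)+1=n-k+1-i$. A parallel case analysis for $n-k+1\le i\le n$, splitting according to whether $p(i)=n-i+1$ lands in the ascending part of the domain (which happens precisely when $i\ge k+1$) or in the descending part (when $i\le k$), yields $p^2(i)=i$ on the middle block $n-k+1\le i\le k$ and $p^2(i)=n+k+1-i$ on the block $k+1\le i\le n$.

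Collecting these three regimes, I expect to find that $p^2$ is the direct sum
\[
p^2 \;=\; \big((n-k)\,(n-k-1)\cdots 1\big)\ \oplus\ \big((n-k+1)(n-k+2)\cdots k\big)\ \oplus\ \big(n\,(n-1)\cdots(k+1)\big),
\]
that is, a decreasing block on the values $[1,n-k]$, the identity on the values $[n-k+1,k]$ (empty exactly when $n$ is even and $k=n/2$), and a decreasing block on the values $[k+1,n]$. Since each summand is monotone it avoids $312$, and a direct sum of $312$-avoiders is again $312$-avoiding: in any occurrence of $312$ the first (and largest) entry would have to sit in a block no later than the other two, but in a direct sum every entry of a later block exceeds every entry of an earlier block, forcing all three entries into a single summand, where no $312$ can occur. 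Hence $p^2$ avoids $312$, and combined with the first half this shows $p$ is strongly $312$-avoiding.

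The main obstacle is the bookkeeping in the computation of $p^2$: correctly identifying, for each range of $i$, which of the two formulas for $p$ applies on the second application, and tracking the boundary indices between the three regimes (in particular the possibly empty middle block). Once $p^2$ is seen to be this ``decreasing/identity/decreasing'' direct sum, the avoidance conclusion is routine.
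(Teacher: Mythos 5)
Your proposal is correct and follows essentially the same route as the paper: verify directly that $p$ avoids $312$, compute $p^2$ explicitly to find it is the decreasing block $(n-k)\cdots 1$, followed by the identity block $(n-k+1)\cdots k$, followed by the decreasing block $n\cdots(k+1)$, and conclude that this direct sum of monotone blocks avoids $312$. Your version just spells out the functional bookkeeping (and the direct-sum avoidance argument) in more detail than the paper does.
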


\begin{proof}  It is clear that $p$ does not contain a 312 pattern.  
Calculating $p^2$, we see that it  is composed of the following three parts:
\begin{enumerate}
\item{The first $n-k$ entries will be $(n-k),\ldots,3,2,1$ respectively.}
\item{The next $k-(n-k)=2k-n$ entries correspond to entries with values and positions $k, (k-1), \ldots, n-k+1$ in $p$.  Because the order of these entries is monotone decreasing in $p$, these entries will be in increasing order in $p^2$.}
\item{The last $n-k$ entries will reverse the order of the first $n-k$ entries in $p$.  That is, the last $n-k$ entries of $p^2$ are $n, (n-1), \ldots, (k+1)$.}
\end{enumerate}

Hence $p^2 = (n-k)\; (n-k-1) \cdots 3\;2\;1 \; (n-k+1) \; (n-k+2) \; \cdots k \; n \; (n-1) \cdots (k+1)$, a concatenation of three blocks of monotone sequences in increasing order.  Thus $p^2$ avoids 312 as well.  As such, $p$ is a strongly 312-avoiding permutation.
\end{proof}

\subsection{Enumeration}
Rearranging \eqref{funceq}, we get the equality
\begin{equation} \label{sav} \textup{Sav}_{312}(z) =\frac{1}{1-B(z)}.\end{equation}
It follows from Theorem~\ref{george_ending_in_1} that there are $\lfloor \frac{n}{2} \rfloor$ strongly 312-avoiding permutations of length $n$ and ending in 1 if $n\geq 2$, and there is one such permutation if $n=1$.
Therefore, \[B(z)=z+\frac{(z+1)z^2}{(1-z^2)^2}=\frac{z^4-z^3+z}{(z-1)^2(z+1)}.\]
So \eqref{sav} yields
\[ \textup{Sav}_{312}(z) =\frac{-z^3+z^2+z-1}{z^4-2z^3+z^2+2z-1}.\]

So in particular,  $\textup{Sav}_{312}(z)$ is rational. Its root of smallest modulus is about 0.4689899435, so the exponential growth rate of the sequence of the numbers $\textup{Sav}_{312}(n)$ is the reciprocal of that root, or about 2.132241882. 
The first few elements of the sequence, starting with $n=1$, are 1, 2, 4, 9, 19, 41, 87, 186, 396, 845.

Interestingly, the sequence is in the Online Encyclopedia of Integer Sequences \cite{oeis} as Sequence A122584, where it is mentioned in connection to work in 
Quantum mechanics \cite{messiah}.

\subsection{Equivalent patterns}
As the pattern 231 is the inverse of 312, it is straightforward to see that $p$ is strongly 231-avoiding if and only if
$p^{-1}$ is strongly 312-avoiding. Therefore, $\textup{Sav}_{312}(n)=\textup{Sav}_{231}(n)$ for all $n$. 

Similarly, for any pattern $q$, we have the equality $\textup{Sav}_{q}(n)=\textup{Sav}_{q^{-1}}(n)$ for all $n$. The following result, while similar in flavor, is a little bit less obvious.

\begin{proposition} Let $q$ be any pattern, and let $q'$ denote the {\em reverse complement} of $q$. 
Then \[\textup{Sav}_{q}(n)=\textup{Sav}_{q'}(n)\] for all positive integers $n$.
\end{proposition}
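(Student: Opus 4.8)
The plan is to exhibit an explicit bijection between strongly $q$-avoiding and strongly $q'$-avoiding permutations of each length $n$, and the bijection I propose is conjugation by the decreasing permutation. Write $\delta = \delta_n = n\,(n-1)\cdots 2\,1$ for the decreasing permutation of length $n$, so that $\delta(i) = n+1-i$ and $\delta$ is an involution, $\delta^2 = \mathrm{id}$. The first step is the observation that the reverse complement of any permutation $p$ is precisely the conjugate $\delta p \delta$: indeed $(\delta p \delta)(i) = n+1 - p(n+1-i)$, which is exactly the entry in position $i$ of the reverse complement of $p$. In particular the map $\phi\colon p \mapsto \delta p \delta$ is an involutive bijection of $S_n$ onto itself, and the same identity applied to the pattern $q$ (now with $\delta$ of the appropriate length $k$) shows that $q' = \delta q \delta$.

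The crucial step, and the reason this statement is a little less obvious than the inverse symmetry $\textup{Sav}_{q}(n)=\textup{Sav}_{q^{-1}}(n)$, is that conjugation interacts well with squaring while the bare reverse and bare complement operations do not. Since $\delta^2 = \mathrm{id}$, I would compute
\[
\phi(p)^2 = (\delta p \delta)(\delta p \delta) = \delta p^2 \delta = \phi(p^2),
\]
so that taking the reverse complement commutes with squaring: the square of the reverse complement of $p$ is the reverse complement of $p^2$.

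The final step combines this with the standard pattern-avoidance symmetry, the same one used earlier in its inverse form: a permutation $r$ avoids a pattern $q$ if and only if its reverse complement avoids $q'$ (this is the $180^{\circ}$ rotation of the plot of $r$, carrying each occurrence of $q$ to an occurrence of $q'$ and conversely). Applying this to $r=p$ gives that $p$ avoids $q$ iff $\phi(p)$ avoids $q'$; applying it to $r=p^2$ and invoking the identity $\phi(p^2)=\phi(p)^2$ gives that $p^2$ avoids $q$ iff $\phi(p)^2$ avoids $q'$. Hence $p$ is strongly $q$-avoiding if and only if $\phi(p)$ is strongly $q'$-avoiding, and since $\phi$ is a bijection of $S_n$ the two counts agree.

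I expect the only genuine subtlety to be the middle step, namely recognizing that conjugation by $\delta$ is the one operation that simultaneously realizes reverse complementation and commutes with squaring; everything else is routine bookkeeping of the dihedral symmetries of pattern containment. It is worth flagging explicitly why the analogous argument fails for reverse or complement alone: reverse is the map $p \mapsto p\delta$ and complement is $p \mapsto \delta p$, and neither $(p\delta)^2$ nor $(\delta p)^2$ simplifies to a reverse (respectively complement) of $p^2$, so those symmetries of ordinary avoidance need not descend to strong avoidance.
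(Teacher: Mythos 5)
Your proof is correct and is essentially the paper's own argument: both identify the reverse complement with conjugation by the decreasing involution ($\delta p\delta$, written $rpr$ in the paper), use $\delta^2=\mathrm{id}$ to get $(\delta p\delta)^2=\delta p^2\delta$, and then apply the standard fact that reverse complementation carries $q$-avoidance to $q'$-avoidance. Your closing remark on why reverse or complement alone fail is a nice addition but does not change the substance.
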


\begin{proof} Let $p$ be a permutation of length $n$, and  let $r$ denote the involution $(1\ n)(2 \ n-1 )\cdots $.  Let us multiply permutations left to right.
Then $pr$ is the reverse of $p$, while $rp$ is the complement of $p$. Note that  $(rp)r = r(pr)$, showing why it does not matter in which order we  take the reverse and the complement of a permutation. 

Now the square of the reverse complement of $p$ is
\[(rpr) (rpr) =rprrpr=rp^2r,\] since $r$ is an involution, so $r^2$ is the identity. On the other hand, 
$rp^2 r$ is also the reverse complement of $p^2$. 

If $p$ is strongly $q$-avoiding, then in particular, $p$ is $q$-avoiding, so the reverse complement $p'$ of $p$ avoids $q'$.
Furthermore,  $p^2$ avoids $q$, so the reverse complement of $p^2$, {\em which we just proved is also
the square of the reverse complement of $p$} also avoids $q'$. So $p'$ is strongly $q'$-avoiding. 
\end{proof}

Note that taking the reverse complement of $p$ corresponds to rotating the permutation matrix of $p$ by
180 degrees, and that rotation commutes with matrix multiplication in the sense that $A'B'=(AB)'$, where $'$
denotes the 180 degree rotation. That implies that $p$ and $p^2$ avoid $q$, if and only if  $p'$ and $(p^2)'$ avoid $q'$. 

As a consequence,  $\textup{Sav}_{132}(n)=\textup{Sav}_{213}(n)$ for all $n$. This provides some additional motivation to compute
the numbers  $\textup{Sav}_{132}(n)$. We will discuss that very difficult task in Section~\ref{sec132}.

\section{The pattern 321} \label{sec-321}
It is straightforward to prove a lower bound for the numbers $\textup{Sav}_{321}(n)$ that shows that for large $n$,
the inequality  $\textup{Sav}_{321}(n) > \textup{Sav}_{312}(n)$ holds.

Indeed, let us call a permutation $p=p_1p_2\cdots p_n$
{\em block-cyclic} if it has both of the following properties.
\begin{enumerate}
\item It is possible to cut $p$ into blocks $B_1,B_2,\ldots ,B_t$ of entries in consecutive positions so that
for all $i<j$, the block $B_i$ is on the left of the block $B_j$, and each entry in $B_i$ is smaller than each entry in $B_j$.
\item Each block is either a singleton, or its entries can be written in one-line notation as $(a+i) \: (a+i+1) \cdots (a+k) \: (a+1) \cdots (a+i-1)$,
for some integers $1 < i \leq k$.  That is, each block is a singleton or a power of the cycle $(a +1 \; a+2 \cdots a+k)$ that is not the identity.
(It follows from Property 1 that the set of entries in each block is an {\em interval}.)
\end{enumerate}

\begin{example} The following are all block-cyclic (the bars are indicating the border between blocks).  
\begin{enumerate} \item $p=1|423|65|897$,
\item $p=4123|5|8967$, 
\item $p=1|2|3|4|5$. 
\end{enumerate}
\end{example}

Note that each block-cyclic permutation is 321-avoiding. Furthermore, {\em any} power of a block-cyclic permutation is
block-cyclic, and so it is also 321-avoiding. Therefore, block-cyclic permutations are all 321-avoiding. Let $h_n$ be the number of block-cyclic permutations of length $n$, and let $H(z)=\sum_{n\geq 0} h_nz^n$. 
The number of allowed blocks of size $k$ is 1 if $k=1$, and $k-1$ if $k>1$ (since longer blocks cannot be monotone
increasing), leading to the formula 
\[H(z)= \frac{1}{1-z-\sum_{k\geq 2} (k-1)z^k} =\frac{1}{1-z-\frac{z^2}{(1-z)^2}}=\frac{(1-z)^2}{1-3z+2z^2-z^3}.\]
The singularity of smallest modulus of the denominator is about 0.430159709, so the exponential growth rate
of the sequence $h_n$ is the reciprocal of that number, or about 2.324717957. As $h_n \leq \textup{Sav}_{321}(n)$
for all $n$, we have proved the following. 

\begin{corollary} The inequality \[2.3247 \leq \limsup_{n\rightarrow \infty}\sqrt[n]{\textup{Sav}_{321}(n)} \]
holds. 
\end{corollary}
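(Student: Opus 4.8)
The plan is to obtain the lower bound not by analyzing all strongly $321$-avoiding permutations directly, but by exhibiting a large, explicitly enumerable subfamily of them --- the block-cyclic permutations introduced above --- and reading off the exponential growth rate of that subfamily from its rational generating function $H(z)$. Since every block-cyclic permutation is strongly $321$-avoiding, we have $h_n \leq \textup{Sav}_{321}(n)$ for every $n$, so any exponential growth rate we can certify for $h_n$ is automatically a lower bound for the growth of $\textup{Sav}_{321}(n)$.

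The conceptual heart of the argument, and the step I would be most careful about, is the claim that block-cyclic permutations really are strongly $321$-avoiding, i.e.\ that the family is closed under taking powers. Property 1 forces each block to be an interval of values sitting in the corresponding interval of positions, so a block-cyclic permutation is the direct sum of its blocks and every power distributes blockwise. Within a block, Property 2 says the permutation is a cyclic rotation of an increasing run, that is, addition of a fixed constant modulo the block length; the rotations of a fixed block of size $k$ therefore form a cyclic group of order $k$, and any power of a block is again a rotation of the same underlying interval. The one delicate point is the degenerate case: a nontrivial rotation raised to a power that is a multiple of its order becomes the identity on that block, which is the single rotation excluded by Property 2. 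But the identity on an interval is exactly a concatenation of singletons, each a legal block, so after refining the block partition in this case the power is again block-cyclic. This handles every power and confirms $h_n \leq \textup{Sav}_{321}(n)$.

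With the family in hand, the enumeration is the standard sequence-of-blocks construction: a block-cyclic permutation is an ordered sequence of blocks, there is one admissible block of size $1$ and exactly $k-1$ of each size $k>1$ (the nonidentity rotations), giving $H(z) = 1/(1 - z - z^2/(1-z)^2) = (1-z)^2/(1 - 3z + 2z^2 - z^3)$.

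Finally I would convert the location of the dominant singularity into a growth rate. Because the coefficients $h_n$ are nonnegative, by Pringsheim the singularity of smallest modulus of $H$ lies on the positive real axis; it is the smallest positive root $\rho \approx 0.430159709$ of $1 - 3z + 2z^2 - z^3$, and since the numerator $(1-z)^2$ vanishes only at $z=1$ there is no cancellation, so $\rho$ is a genuine pole and equals the radius of convergence of $H$. Cauchy--Hadamard then gives $\limsup_{n}\sqrt[n]{h_n} = 1/\rho \approx 2.324717957$. Since $h_n \leq \textup{Sav}_{321}(n)$ gives $\sqrt[n]{h_n} \leq \sqrt[n]{\textup{Sav}_{321}(n)}$ termwise and hence the same inequality for the limit superiors, we obtain $\limsup_{n\to\infty}\sqrt[n]{\textup{Sav}_{321}(n)} \geq 1/\rho > 2.3247$, which is the claimed bound. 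The main obstacle is the closure-under-powers verification; once the family is known to be strongly $321$-avoiding, the singularity analysis is routine.
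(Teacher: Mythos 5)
Your proposal is correct and follows essentially the same route as the paper: lower-bound $\textup{Sav}_{321}(n)$ by the block-cyclic permutations, enumerate them via the sequence-of-blocks construction to get $H(z)=(1-z)^2/(1-3z+2z^2-z^3)$, and read the growth rate off the dominant singularity. Your write-up is in fact somewhat more careful than the paper's at two points it leaves implicit: re-partitioning a block into singletons when a power of it degenerates to the identity, and the Pringsheim/Cauchy--Hadamard justification that the pole location yields the exponential growth rate.
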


Note that this proves that for large $n$, there are more permutations so that {\em all} their powers avoid 321 than
permutations so that just they and their square avoids 312. It is possible to improve this lower bound with more
complicated constructions, but we have no conjecture as to what the actual growth rate of the sequence
$\textup{Sav}_{321}(n)$ is.

\section{The pattern 132} \label{sec132}

We did not succeed in our efforts to enumerate strongly 132-avoiding permutations. On the one hand, the set of such 
permutations clearly contains all {\em involutions} that avoid 132, and the number of such involutions is known 
\cite{simion} to be
${n\choose \lfloor n/2 \rfloor}$. Therefore, the exponential growth rate of the sequence of the numbers $\textup{Sav}_{132}(n)$
is at least 2. On the  other hand, the first few terms of the sequence starting with $n=1$ are
 1, 2, 5, 12, 24, 50, 101, 202, 398, 806, 1568, 3148, 6198, 12306, 24223, 48314, which seems to suggest an 
exponential growth rate of {\em exactly} 2. Considering the first 40 terms, the trend holds. 

Therefore, we ask the following questions. 

\begin{question} Is it true that 
\[\lim_{n\rightarrow \infty} \sqrt[n]{\textup{Sav}_{132}(n)} =2 ?\]
\end{question}

Note that we do not even know that the limit in the previous question exists.  Indeed, let us call a pattern
$q$ {\em indecomposable} if it is not the direct sum of other patterns, that is, when it cannot be cut into 
two parts so that everything before the cut is {\em smaller} than everything after the cut. For instance, 
2413 and 3142 are indecomposable, but 132 is not. 

\begin{proposition} If $q$ is indecomposable, then $\lim_{n\rightarrow \infty} \sqrt[n]{\textup{Sav}_{q}(n)}$
exists. \end{proposition}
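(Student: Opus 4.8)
The plan is to show that the sequence $a_n = \textup{Sav}_q(n)$ is \emph{supermultiplicative}, that is, that $a_m a_n \le a_{m+n}$ for all $m,n$, and then to invoke Fekete's lemma (applied to $\log a_n$) to conclude that $\lim_{n\to\infty}\sqrt[n]{a_n}$ exists and in fact equals $\sup_n \sqrt[n]{a_n}$. The engine behind supermultiplicativity is the direct sum operation $\oplus$ already used in Section~\ref{312-george_1}. The crucial algebraic observation is that \emph{squaring respects direct sums}: if $u$ has length $m$ and $v$ has length $n$, then $(u\oplus v)^2 = u^2 \oplus v^2$. Indeed, $u\oplus v$ maps the interval $[1,m]$ into itself, acting there as $u$, and maps $[m+1,m+n]$ into itself, acting there as a shifted copy of $v$; so applying it twice acts as $u^2$ on the first block and as $v^2$ on the second. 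This is precisely the feature that lets the hypothesis on the square survive the construction.

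Next I would prove the combinatorial heart of the argument: \emph{if $q$ is indecomposable, then $u\oplus v$ avoids $q$ whenever both $u$ and $v$ avoid $q$.} Suppose instead that $u\oplus v$ contained an occurrence of $q$ not lying entirely inside one of the two blocks. Because every entry of the first block sits to the left of, and has smaller value than, every entry of the second block, such an occurrence would split into a nonempty lower-left part (its entries in the first block) followed by a nonempty upper-right part (its entries in the second block), exhibiting $q$ as a nontrivial direct sum $q = q_1 \oplus q_2$. This contradicts the indecomposability of $q$. Hence every occurrence of $q$ in $u\oplus v$ lies inside a single block, and since $u$ and $v$ both avoid $q$, so does $u\oplus v$. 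This is the only place the indecomposability hypothesis enters, and it is the step I expect to be the crux: one must state the direct-sum decomposition of a straddling occurrence carefully, and it is exactly here that decomposable patterns (such as $12\cdots k$) would break the argument.

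With these two facts in hand the rest is routine. Given strongly $q$-avoiding permutations $p$ of length $m$ and $p'$ of length $n$, the permutation $p\oplus p'$ avoids $q$ (since $p$ and $p'$ do), and its square $p^2 \oplus (p')^2$ also avoids $q$ (since $p^2$ and $(p')^2$ do, again by the lemma), so $p\oplus p'$ is strongly $q$-avoiding. The map $(p,p')\mapsto p\oplus p'$ is injective, because from $p\oplus p'$ one recovers $p$ as the pattern of the first $m$ entries and $p'$ as the pattern of the last $n$ entries; this gives $\textup{Sav}_q(m)\,\textup{Sav}_q(n)\le \textup{Sav}_q(m+n)$. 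Finally, for any indecomposable $q$ of length at least $2$ the monotone pattern $12\cdots k$ is decomposable, so $q\neq 12\cdots k$, and the identity permutation (whose square is the identity) is then strongly $q$-avoiding; thus $a_n\ge 1$ for all $n$ and $\log a_n$ is well defined. Fekete's lemma now yields the existence of the limit, while the Marcus--Tardos bound $\textup{Sav}_q(n)\le \Av_n(q)\le c^n$ guarantees it is finite.
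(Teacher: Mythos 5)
Your proof is correct and follows essentially the same route as the paper: closure of strongly $q$-avoiding permutations under direct sums (which is where indecomposability enters), giving supermultiplicativity, followed by Fekete's lemma plus an exponential upper bound. The only difference is one of detail rather than substance --- the paper dismisses the closure property with ``it is clear,'' whereas you spell out the two facts behind it, namely $(u\oplus v)^2 = u^2\oplus v^2$ and the straddling-occurrence argument, which is a worthwhile elaboration but not a different proof.
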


\begin{proof}
It is clear that if $p$ and $p'$ are both strongly $q$-avoiding, then so is their direct sum $p\oplus p'$. 
Therefore, the inequality $\textup{Sav}_n (q) \cdot \textup{Sav}_m(q) \leq \textup{Sav}_{n+m}(q) $ holds, so
by Fekete's lemma, the sequence $\sqrt[n]{\textup{Sav}_{q}(n)}$ is monotone increasing. As that sequence
is bounded from above, it has to be convergent. 
\end{proof} 

Perhaps this is the reason why 132, and its reverse complement 213, prove to be the most difficult of all patterns of length three for all purposes, because they are {\em not} indecomposable. All of 321, 231, and 312 are. The monotone pattern 123 is not. 

\begin{question} Is it true that for all positive integers $n$, the inequality
\[\textup{Sav}_{132}(n) \leq 2^n \] holds?
\end{question}

Note that if a permutation $p$ is of order three, that is, all its cycle lengths are 1 or 3, then $p^2=p^{-1}$, so among such
permutations, all 132-avoiding permutations are automatically strongly 132-avoiding. This raises the following question. 

\begin{question} How many 132-avoiding permutations of length $n$ are there in which each cycle length is 1 or 3?
\end{question}

\begin{center} {\bf Acknowledgment} \end{center}
We are grateful to Michael Cory, Michael Engen, Tony Guttmann and Jay Pantone for help in computing.

Mikl\'os B\'ona has been supported by Simons Collaboration Grant 421967.

\end{document}